\documentclass[a4paper,11pt]{article}
\usepackage{amsfonts}
\usepackage{amsthm}
\usepackage{amsmath}
\usepackage{latexsym}
\usepackage{amssymb}
\usepackage[ansinew]{inputenc}
\usepackage{graphicx}
\usepackage{dsfont}

\newtheorem*{defn}{Definition}
\newtheorem{fed}{Definition}[section]
\newtheorem*{fed*}{Definition}
\newtheorem*{feds*}{Definitions}
\newtheorem{teo}[fed]{Theorem}
\newtheorem*{teo*}{Theorem}
\newtheorem{lem}[fed]{Lemma}
\newtheorem{cor}[fed]{Corollary}
\newtheorem{pro}[fed]{Proposition}
\theoremstyle{definition}
\newtheorem{rem}[fed]{Remark}

\newtheorem*{rems*}{Remarks}


\oddsidemargin -0.3cm
\topmargin -1.2cm
\textwidth 16.6truecm
\textheight 24.3truecm 

\def\n0{n_{ \text{\rm \tiny o}}}

\def\bce{\begin{center}}
\def\ece{\end{center}}

\DeclareMathOperator*{\argmin}{argmin}

\def\ds{\displaystyle}

\def\cF{\mathcal F}

\def\QED{\hfill $\square$}

\def\bm{\left[\begin{array}}
\def\em{\end{array}\right]}
\def\ben{\begin{enumerate}}
\def\een{\end{enumerate}}
\def\bit{\begin{itemize}}
\def\eit{\end{itemize}}
\def\barr{\begin{array}}
\def\earr{\end{array}}

\def\eps{\varepsilon}

\def\N{\mathbb{N}}


\def\cV{{\cal V}}

\newcommand{\sub}[2]{{#1}_{\mbox{\tiny{${#2}$}}}}

\newcommand{\mat}{\mathcal{M}_d(\mathbb{C})}

\newcommand{\matsa}{\mathcal{H}(n)}

\newcommand{\matpos}{\mat^+}

\def\beq{\begin{equation}}
\def\eeq{\end{equation}}


\def\Ax2{\,( S_{E(\cF)^\#_\cV})\hat{}_x }




\usepackage{color}

\definecolor{rojo}{rgb}{1,0,0}
\definecolor{azul}{rgb}{0,0,1}


\begin{document}

\title{New deterministic approaches to the least square mean}
\author{ Eduardo M. Ghiglioni 
\footnote{Partially supported by CONICET 
(PIP 0150/14), FONCyT (PICT 1506/15) and  FCE-UNLP (11X681), Argentina. } \ 
 \footnote{ e-mail addresses: eghiglioni@mate.unlp.edu.ar}
\\ {\small Depto. de Matem\'atica, FCE-UNLP
and IAM-CONICET, Argentina  }}

\date{}
\maketitle
\begin{abstract}
	In this paper we presents new deterministic approximations to the least square mean, also called geometric mean or barycenter of a finite collection of positive definite matrices.  Let $A_1, A_2, \ldots, A_m$ be any elements of $\matpos$, where the set $\matpos$ is the open cone in the real vector space of selfadjoint matrices $\matsa$. We consider a sequence of blocks of $m$ matrices, that is, $$(A_1, \ldots, A_m, A_1, \ldots, A_m, A_1, \ldots A_m, \ldots).$$ We take a permutation on every block and then take the usual inductive mean of that new sequence. The main result of this work is that the inductive mean of this block permutation sequence approximate the least square mean on $\matpos$. This generalizes a Theorem obtain by Holbrook. Even more, we have an estimate for the rate of convergence. 
\end{abstract}

\tableofcontents

\section{Introduction}

\subsection{Setting of the problem}

\noindent Let $\matpos$ denote the set of (strictly) positive matrices, which is an open cone in the real vector space of selfadjoint matrices $\matsa$. In particular, it inherits a differential structure where the tangent spaces can be
identified with $\matsa$. The manifold $\matpos$ can be endowed with a natural Riemannian structure such that the natural action of the invertible matrices by conjugations becomes isometric. With respect to this metric structure,  if  $\alpha:[a,b]\to\matpos$ is a piecewise smooth path, its length is defined by 
$$
L(\alpha)=\int_a^b \|\alpha^{-1/2}(t)\alpha'(t)\alpha^{-1/2}(t)\|_2 \,dt,
$$
where $\|\cdot\|_2$ denotes the Frobenius or Hilbert-Schmidt norm.
%
As usual, a distance $\delta$ can be defined by 
$$
\delta(A,B)=\inf \{L(\alpha):\ \mbox{$\alpha$ is a piecewise smooth path connecting $A$ with $B$}\}.
$$
The infimum is actually a minimum, and the geodesic connecting two positive matrices $A$ and $B$ has the following simple expression
$$
\gamma_{AB}(t)=A^{1/2}(A^{-1/2}BA^{-1/2})^{t}A^{1/2}\,.
$$
It is usual in matrix analysis to use the notation $A\#_t B$ instead of $\gamma_{AB}(t)$. The midpoint $A\#_{\frac{1}{2}} B$ is called geometric mean or barycenter between $A$ and $B$. 

\medskip

With the aforementioned Riemannian structure, $\matpos$ becomes a Riemannian manifold with non-positive curvature. In particular, the distance function satisfies the so called semiparallelogram law.

\begin{pro}
	Let $X,, Y$ be two matrices in $\matpos$. Then for any $Z$ in $\matpos$
	\begin{equation}\label{eq1}
	\delta^2(X \#_{\frac{1}{2}} Y, Z) \leq \ds\frac{1}{2}\delta^2(X, Z) + \ds\frac{1}{2}\delta^2(Y, Z) - \ds\frac{1}{4}\delta^2(X, Y).
	\end{equation}
\end{pro}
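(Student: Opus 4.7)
Inequality \eqref{eq1} is the semiparallelogram law of Bruhat--Tits, which characterizes CAT(0) (equivalently NPC) metric spaces. My plan is to derive it from the classical fact that $(\matpos,\delta)$ is a Cartan--Hadamard manifold, combined with a symmetry reduction that eliminates the point $Z$.

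\noindent First I would exploit the homogeneity of the metric. For any $G\in GL(d)$ the conjugation $A\mapsto GAG^*$ is a $\delta$-isometry (read off directly from the length formula $L(\alpha)$, since $(GAG^*)^{-1/2}(G\alpha' G^*)(GAG^*)^{-1/2}$ differs from $A^{-1/2}\alpha' A^{-1/2}$ by a unitary conjugation) and commutes with the midpoint operation $\#_{1/2}$ via the explicit expression for $\gamma_{AB}$. Taking $G=Z^{-1/2}$ normalizes $Z$ to the identity, so that \eqref{eq1} becomes
\[
\delta^2\!\bigl(\tilde X\#_{1/2}\tilde Y,\,I\bigr)\le \tfrac12\delta^2(\tilde X,I)+\tfrac12\delta^2(\tilde Y,I)-\tfrac14\delta^2(\tilde X,\tilde Y),
\]
with $\tilde X=Z^{-1/2}XZ^{-1/2}$, $\tilde Y=Z^{-1/2}YZ^{-1/2}$, and $\delta^2(\cdot,I)=\|\log\cdot\|_2^2$.

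\noindent Next I would recast the inequality as the $t=1/2$ case of the strong convexity of $t\mapsto\delta^2(\gamma(t),Z)$ along the geodesic $\gamma(t)=X\#_t Y$, namely
\[
\delta^2(\gamma(t),Z)\le(1-t)\,\delta^2(X,Z)+t\,\delta^2(Y,Z)-t(1-t)\,\delta^2(X,Y),\qquad t\in[0,1].
\]
In a flat space this reduces to the Euclidean parallelogram identity with equality; in a manifold of nonpositive sectional curvature it follows from Toponogov's (CAT(0)) comparison theorem applied to the geodesic triangle with vertices $X$, $Y$, $Z$, by comparing side lengths with the Euclidean triangle having the same side lengths.

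\noindent \textbf{Main obstacle.} The nontrivial ingredient is therefore the curvature bound. By the $GL(d)$-invariance of the metric it suffices to compute the sectional curvature at $A=I$, where $T_I\matpos$ is identified with the space of selfadjoint matrices and a direct Lie-theoretic calculation gives
\[
\langle R_I(H,K)K,\,H\rangle_I=-\tfrac14\|[H,K]\|_2^2\le 0\qquad\text{for all selfadjoint }H,K.
\]
This is the standard curvature formula for the Riemannian symmetric space $GL(d)/U(d)$, which I would cite from a standard reference on positive definite matrices rather than reprove. Once nonpositive curvature is in hand, the comparison step in the previous paragraph closes the argument at $t=1/2$.
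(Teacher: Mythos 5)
Your outline is correct and coincides with the route the paper itself relies on: the proposition is stated there without proof, as the standard semiparallelogram (Bruhat--Tits) inequality for the Hadamard manifold $\matpos$, i.e.\ precisely the nonpositive-curvature comparison argument you sketch. The reduction to $Z=I$ via the isometric $GL(d)$-action and the curvature formula $\langle R_I(H,K)K,H\rangle_I=-\tfrac14\|[H,K]\|_2^2$ for the symmetric space $GL(d)/U(d)$ are the standard ingredients (see, e.g., Bhatia, \emph{Positive Definite Matrices}, Ch.~6), so deferring them to a reference is appropriate.
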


Inductively for all dyadic rationals $t \in [0, 1]$, and then by continuity we get the following inequality for other points in the geodesic:
\begin{equation}\label{eq2}
\delta^2(X \#_t Y, Z) \leq (1-t)\delta^2(X, Z) + t\delta^2(Y, Z) - t(1-t)\delta^2(X, Y).
\end{equation}
As a consequence of this inequality we get that the function 
$$
f(t) = \delta(A \#_t A^{'}, B \#_t B^{'})
$$
is convex $[0, 1]$ for any $A,A',B,B'\in \matpos$, more precisely 
\begin{equation}\label{eq5}
\delta(A \#_t A^{'}, B \#_t B^{'}) \leq (1-t)\delta(A, B) + t\delta(A^{'}, B^{'}).
\end{equation}

From the semiparallelogram law, we obtain the following alternative characterization of the geometric mean
$$
A\#_{\frac12} B=\argmin_{C\in \matpos} \ \Big(\delta^2(A,C)+\delta^2(B,C)\,\Big).
$$
There is no reason to restrict our attention to only two matrices. The notion of geometric mean can be generalized for more than two matrices in the obvious way
$$
G(A_1,\ldots,A_m):=\argmin_{C\in \matpos} \ \Big(\sum_{j=1}^m\delta^2(A_j,C)\,\Big).
$$
The solution of this least square minimization problem exists and is unique because of the convexity properties of the distance $\delta(\cdot,\cdot)$. A  multivariable version of the semiparallelogram law holds. 

\begin{pro} Let $A_1, A_2, \ldots, A_m$ be any elements of $\matpos$ and let
	$G = G(A_1, \ldots , A_m)$. Then for all $Z \in \matpos$ we have
	\begin{equation}\label{semip}
	\delta^2(Z, G) \leq \ds\frac{1}{m}\left(\ds\sum_{j = 1}^{m} \delta^2(Z, A_j) - \delta^2(G, A_j)\right).
	\end{equation}
\end{pro}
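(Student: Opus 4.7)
The plan is to exploit the variational characterization $G=\argmin_{C\in\matpos}\sum_{j=1}^{m}\delta^{2}(A_{j},C)$ by perturbing $G$ along the geodesic towards $Z$, and apply the two–variable inequality \eqref{eq2} on each summand.

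Concretely, for $t\in[0,1]$ consider the geodesic point $G_t := Z\#_t G$, so $G_0=Z$ and $G_1=G$. First I would apply \eqref{eq2} with $X=Z$, $Y=G$ and the free point replaced by $A_j$ to obtain
\begin{equation*}
\delta^{2}(G_t,A_j)\leq (1-t)\,\delta^{2}(Z,A_j)+t\,\delta^{2}(G,A_j)-t(1-t)\,\delta^{2}(Z,G),
\end{equation*}
and then sum these $m$ inequalities to get
\begin{equation*}
\sum_{j=1}^{m}\delta^{2}(G_t,A_j)\leq (1-t)\sum_{j=1}^{m}\delta^{2}(Z,A_j)+t\sum_{j=1}^{m}\delta^{2}(G,A_j)-m\,t(1-t)\,\delta^{2}(Z,G).
\end{equation*}

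The next step is to invoke the defining property of $G$: since $G$ minimizes $C\mapsto\sum_{j}\delta^{2}(A_j,C)$, we have $\sum_{j}\delta^{2}(G,A_j)\leq\sum_{j}\delta^{2}(G_t,A_j)$ for every $t$. Substituting this into the left-hand side of the previous display and moving the $t$-term on the right to the left gives
\begin{equation*}
(1-t)\sum_{j=1}^{m}\delta^{2}(G,A_j)\leq (1-t)\sum_{j=1}^{m}\delta^{2}(Z,A_j)-m\,t(1-t)\,\delta^{2}(Z,G).
\end{equation*}
For $t\in(0,1)$ I can divide by $(1-t)$, obtaining $\sum_j \delta^2(G,A_j) \le \sum_j \delta^2(Z,A_j) - m\,t\,\delta^2(Z,G)$, and then let $t\to 1^{-}$. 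Rearranging and dividing by $m$ yields \eqref{semip}.

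I do not expect a real obstacle here: the two-variable semiparallelogram law \eqref{eq2} is already stated in the excerpt, and the uniqueness/existence of $G$ is granted. The only subtle point is to set up the geodesic in the correct direction (from $Z$ towards $G$, not the reverse) so that the minimality of $G$ can be combined with the $-t(1-t)\delta^{2}(Z,G)$ term; this is what allows the cancellation of the factor $(1-t)$ and the passage to the limit $t\to 1$. No curvature estimate beyond \eqref{eq2} is needed, so the argument is purely metric and works verbatim in any global NPC (CAT(0)) space, of which $(\matpos,\delta)$ is an instance.
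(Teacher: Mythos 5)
Your argument is correct. The paper itself offers no proof of this proposition --- it is stated as a known fact (the variance inequality, going back to the references \cite{[sturm]} and \cite{[Palfia]} on barycenters in NPC spaces) --- so there is nothing to compare line by line; but the route you take is exactly the standard one: apply \eqref{eq2} along the geodesic $t\mapsto Z\#_t G$ against each $A_j$, sum, use the minimality of $G$ to replace $\sum_j\delta^2(Z\#_t G,A_j)$ by $\sum_j\delta^2(G,A_j)$, cancel the factor $(1-t)$, and let $t\to1^-$. The one point worth being explicit about is the direction of the parametrization, which you already flag: with $G_t=Z\#_t G$ the term $-t(1-t)\,\delta^2(Z,G)$ survives division by $(1-t)$ and tends to $-\delta^2(Z,G)$ as $t\to1^-$, which is precisely what produces the left-hand side of \eqref{semip}. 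The argument uses only \eqref{eq2} and the variational characterization of $G$, so, as you note, it is valid in any Hadamard space, consistent with the paper's Section 3.
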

It is also known as the variance inequality, since $G(A_1,\ldots,A_m)$ can be interpreted as a nonlinear expectation of the probability measure
$$
\mu=\frac{1}{m}\sum_{j=1}^m \delta_{\{A_j\}}.
$$


The geometric means naturally appear in many applied problems. For instance, they appear in the study of radar signals (see \cite{Barba} and the references therein for more details). Another typical application of the geometric means is in problems related with the gradient or Newton like optimization methods (see \cite{Bi-Ia},\cite{Maher}).

\subsection{The problem}

\noindent The usual problem dealing with geometric means is that the geometric mean of three or more matrices does not have in general a closed formula  (see \cite{[Bhatia2]}, \cite{[Holbrook]}, \cite{[LyL]} and \cite{LimPal}). In \cite{[Holbrook]}, Holbrook proved that they can be approximated by the so called inductive means. 


\begin{defn}
	Given a sequence of (strictly) positive matrices $A = (A_n)_{n\in\N}$, the inductive means are defined as follows:
	\begin{align*}
	S_1(A) & = A_1\\
	S_{n}(A) & = S_{n-1}(A)  \#_{\frac{1}{n}} A_n \ \ \ \ \ (n \geq 2).
	\end{align*}
\end{defn}

Let  $A_0,\ldots,A_{d-1}$ be positive matrices, and define the cyclic sequence $\sub{A}{cyclic} = (A_n)_{n\in\N}$, where for $n\geq d$ we define
$$
A_n=A_k \quad \mbox{if $n\equiv k$ $\mod d$}.
$$

%

Then, the main result in \cite{[Holbrook]} is the following

\begin{teo}[Holbrook]
	$$
	\lim_{n\to\infty} S_{n}(\sub{A}{cyclic}) =G(A_0,\ldots,A_{d-1}).
	$$
\end{teo}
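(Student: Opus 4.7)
The plan is to combine the geodesic comparison \eqref{eq2} with the multivariable variance inequality \eqref{semip}. Write $B_n$ for the $n$-th term of the cyclic sequence, set $R_n := \delta^2(S_n, G)$ with $G := G(A_0,\dots,A_{d-1})$, and $F(Z) := \sum_{j=0}^{d-1}\delta^2(Z, A_j)$. Applying \eqref{eq2} with $X = S_{n-1}$, $Y = B_n$, $Z = G$, $t = 1/n$ and multiplying by $n$ yields a telescopable recursion for $R_n$; the ``dissipation term'' $\tfrac{n-1}{n}\delta^2(S_{n-1}, B_n)$ that arises will be bounded from below using \eqref{semip} at suitable base points on the orbit. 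Averaging over complete blocks of length $d$ is essentially forced on us, because \eqref{semip} only produces a bound for the full sum $F(Z)$, not for any single summand.

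First I would establish boundedness and slow variation of $(S_n)$: iterating \eqref{eq5} with the constant sequence $G$ on the second slot gives $\delta(S_n, G) \le R := \max_j \delta(A_j, G)$, so the midpoint formula yields $\delta(S_n, S_{n-1}) = \tfrac{1}{n}\delta(S_{n-1}, B_n) \le 2R/n$. Hence $\delta(S_{n-1}, S_{\ell d}) \le 2R/\ell$ whenever $n$ lies in the block $\{\ell d + 1, \dots, (\ell+1)d\}$ and $\ell \ge 1$. Telescoping the $n$-multiplied form of \eqref{eq2} from $n = 2$ to $Nd$ produces
$$Nd \cdot R_{Nd} \;\le\; \sum_{n=1}^{Nd}\delta^2(B_n, G) \;-\; \sum_{n=2}^{Nd}\tfrac{n-1}{n}\,\delta^2(S_{n-1}, B_n) \;=\; N\,F(G) \;-\; \sum_{n=2}^{Nd}\tfrac{n-1}{n}\,\delta^2(S_{n-1}, B_n).$$

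Within the $\ell$-th block ($\ell\ge 1$), $B_n$ hits each $A_j$ exactly once, so replacing $S_{n-1}$ by $S_{\ell d}$ using the slow-variation bound (at the cost of an $O(1/\ell)$ error per term) and then invoking \eqref{semip} at $Z = S_{\ell d}$ yields
$$\sum_{n = \ell d + 1}^{(\ell + 1)d}\delta^2(S_{n-1}, B_n) \;\ge\; F(S_{\ell d}) - C/\ell \;\ge\; F(G) + d\,R_{\ell d} - C/\ell.$$
Substituting back, absorbing $\sum_{n=2}^{Nd} \tfrac{1}{n}\delta^2(S_{n-1}, B_n) = O(\log N)$, and handling the bounded contribution from the initial block $\ell = 0$ separately, we obtain
$$Nd \cdot R_{Nd} \;+\; d\sum_{\ell = 1}^{N-1} R_{\ell d} \;\le\; F(G) + O(\log N).$$
All terms on the left are nonnegative, so $R_{Nd} = O(\log N / N)$, which also supplies the promised rate. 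Convergence of the full sequence then follows since $\delta(S_n, S_{Nd}) \le \sum_{k = Nd+1}^n 2R/k = O(1/N)$ for $Nd \le n < (N+1)d$.

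The delicate step I expect to be the main obstacle is exactly this block comparison: \eqref{semip} cannot be fed $\delta^2(S_{n-1}, B_n)$ directly---only the full sum $F(S_{n-1})$ enjoys a lower bound in terms of $R_{n-1}$, and the cyclic structure is what lets one realize that sum by averaging over a block while the base point stays nearly fixed. The naive alternative of applying \eqref{semip} at $Z = S_n$ and hoping to show $F(S_n)\to F(G)$ by some other route collapses, since by strict convexity of $F$ that conclusion is already equivalent to the theorem.
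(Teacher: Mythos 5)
Your proposal is correct and follows essentially the same route as the paper's proof of its main theorem (of which Holbrook's theorem is the cyclic special case): telescoping the $n$-weighted semiparallelogram inequality \eqref{eq2}, using the slow variation $\delta(S_{n-1},S_{\ell d})=O(1/\ell)$ to move the base point to the start of each block, and invoking the variance inequality \eqref{semip} there, with the cyclic structure guaranteeing that each $A_j$ occurs exactly once per block. The only substantive difference is bookkeeping: the paper runs a block-by-block induction (Lemmas \ref{lemma1prima}, \ref{lemma2prima} and \ref{subsucesion}) that reabsorbs the dissipation term into the prefactor $\frac{k^2}{(k+1)^2}$ and yields the clean rate $O(1/N)$, whereas your global telescoping relegates it to a nonnegative sum on the left and costs a harmless factor of $\log N$ in the rate.
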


\subsection{The main result of this paper}


The main advantage of Holbrook's result is that it is deterministic. The results in \cite{[LyL]} and \cite{[Bhatia2]} say that, if we consider the uniform distribution in $\{0,\ldots,d-1\}$ and we construct a sequence 
$$
A_{(r)}=\big(A_{r(1)}, A_{r(2)}, A_{r(3)}, A_{r(4)},\ldots \big)
$$
taking randomly the matrices  $A_0, \ldots, A_{d-1}$, then almost surely 
$$
\lim_{n\to\infty} S_{n}(A_{(r)}) =G(A_0,\ldots,A_{d-1}).
$$
Although this implies that there are plenty of such a sequences, except for the cyclic one $\sub{A}{cyclic}$, we do not know if given a specific sequence $A$ we have that 
\begin{equation}\label{converge}
\lim_{n\to\infty} S_{n}(A) =G(A_0,\ldots,A_{d-1}).
\end{equation}

The main result of this paper allows to enlarge the set of deterministic examples of sequences $A$ such that \eqref{converge} holds. So, fix positive matrices 
$A_0,\ldots,A_{m-1}$, and consider a sequence of permutations of $m$-elements $\sigma=\{\sigma_j\}_{j\in\N}$. Then, define the sequence
$$
\textit{\textbf{A}}_{\sigma} = (A_{\sigma_1(0)},  \ldots, A_{\sigma_1(m-1)}, A_{\sigma_2(0)},  \ldots, A_{\sigma_2(m-1)}, A_{\sigma_3(0)}, \ldots, A_{\sigma_3(m-1)}, \ldots),
$$ 

Using this notation, the main theorem of this paper is the following:


\begin{teo}\label{mainteo}	
	If $n\geq 1$ and $G=G(A_0, \ldots, A_{m-1})$, then there exists $L>0$ depending on these matrices such that
	\begin{equation}
	\delta^2(S_{n}(\textit{\textbf{A}}_{\sigma}), G) \leq \frac{L}{n}.
	\end{equation}
	In particular
	$$
	\lim_{n\to \infty} S_{n}(\textbf{A}_{\sigma})=G. 
	$$
\end{teo}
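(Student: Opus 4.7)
The plan is to prove the rate $\delta^2(S_n(\mathbf{A}_\sigma), G) \le L/n$ by a block-by-block analysis that combines the geodesic semiparallelogram bound \eqref{eq2} with the variance inequality \eqref{semip} applied to each block of $m$ consecutive terms. Set $T_n := \delta^2(S_n(\mathbf{A}_\sigma), G)$, $M := \max_{0 \le j < m} \delta^2(A_j, G)$ and $V := \sum_{j=0}^{m-1} \delta^2(G, A_j)$. Applying \eqref{eq2} with $X = S_n$, $Y = A_{n+1}$, $Z = G$, $t = 1/(n+1)$ and multiplying by $n+1$ produces the one-step recursion
\[
(n+1)\, T_{n+1} \le n\, T_n + \delta^2(A_{n+1}, G) - \frac{n}{n+1}\, \delta^2(S_n, A_{n+1}).
\]
Dropping the negative term and iterating from $T_1 \le M$ gives the uniform a priori bound $T_n \le M$ for all $n \ge 1$; hence $\delta(S_n, A) \le 2\sqrt{M} =: R$ for every $n$ and every $A \in \{A_0, \ldots, A_{m-1}\}$.

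Fix a block index $k \ge 2$, set $n_0 = (k-1)m$, and sum the one-step recursion for $n = n_0, \ldots, n_0+m-1$. Since the $k$-th block of $\mathbf{A}_\sigma$ is a permutation of $A_0, \ldots, A_{m-1}$, one obtains
\[
(n_0+m)\, T_{n_0+m} \le n_0\, T_{n_0} + V - \sum_{j=0}^{m-1} \frac{n_0+j}{n_0+j+1}\, \delta^2(S_{n_0+j}, A_{\sigma_k(j)}).
\]
The key idea is to replace the moving base points $S_{n_0+j}$ by the single base point $S_{n_0}$, at which the variance inequality \eqref{semip} immediately gives $\sum_{i=0}^{m-1} \delta^2(S_{n_0}, A_i) \ge m\, T_{n_0} + V$. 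The resulting drift error is controlled by $|\delta^2(S_{n_0+j}, A) - \delta^2(S_{n_0}, A)| \le 2R\, \delta(S_{n_0+j}, S_{n_0})$ together with
\[
\delta(S_{n+1}, S_n) = \frac{1}{n+1}\, \delta(S_n, A_{n+1}) \le \frac{R}{n+1},
\]
so that $\delta(S_{n_0+j}, S_{n_0}) \le mR/n_0$. Combining with the inequality $(n_0+j)/(n_0+j+1) \ge 1 - 1/n_0$ and the a priori bound $T_{n_0} \le M$, the total loss is $O(1/n_0)$ and one arrives at the clean block recursion
\[
(n_0+m)\, T_{n_0+m} \le (n_0 - m)\, T_{n_0} + \frac{C_1}{n_0},
\]
for some constant $C_1 = C_1(A_0, \ldots, A_{m-1}, m)$.

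Writing $u_k := T_{(k-1)m}$, the block recursion reads $k\, u_{k+1} \le (k-2)\, u_k + C_1/((k-1) m^2)$; multiplying by $k-1$ shows that $w_k := (k-1)(k-2)\, u_k$ satisfies $w_{k+1} \le w_k + C_1/m^2$, and since $w_2 = 0$, telescoping yields $w_k \le (k-2) C_1/m^2$. Hence $u_k \le C_1/((k-1) m^2)$ for $k \ge 3$, i.e., $T_{(k-1)m} \le C_1/((k-1) m^2)$. For a general $n$, writing $n = (k-1)m + r$ with $0 \le r < m$ and iterating the one-step recursion at most $m-1$ times from $T_{(k-1)m}$ adds only an extra $O(1/n)$ term; after enlarging the constant to absorb the finitely many small $n$, we conclude $T_n \le L/n$ for all $n \ge 1$.

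The main technical obstacle is the error analysis in the second paragraph: the variance inequality \eqref{semip} applies at a single fixed base point, whereas the block sum we need to bound is evaluated at the moving points $S_{n_0+j}$. The whole argument rests on showing that $\delta(S_{n_0+j}, S_{n_0}) = O(1/n_0)$, which is exactly strong enough for the $-m\, T_{n_0}$ drain produced by the variance inequality to survive the error and yield the $1/n$ decay; any weaker drift control would give a sub-optimal rate.
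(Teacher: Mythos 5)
Your proof is correct and follows essentially the same route as the paper (which in turn follows Lim--P\'alfia): the one-step recursion from \eqref{eq2}, summation over a block of $m$ terms, the permutation structure of the block combined with the variance inequality \eqref{semip} at the fixed base point $S_{n_0}$, an $O(1/n_0)$ drift estimate for the moving base points, and a recursion yielding the $1/n$ rate. The only differences are cosmetic: you solve the block recursion by the telescoping substitution $w_k=(k-1)(k-2)u_k$ rather than by direct induction, and you handle $n$ not a multiple of $m$ by iterating the one-step recursion instead of the comparison lemma for inductive means.
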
	

%
%

Holbrook's rate of convergence is $1/n$ as $n \rightarrow \infty$. Actually, as it was pointed out by  Lim and  P\'alfia \cite{[Lim3]}, one cannot expect better convergence rates than $1/n$. For that purpose considerer the case when $m = 2$. So our rate of convergence  should be able to be improved.

\bigskip

Finally, note that the above result is also true if we permutes blocks of length $km$ matrices for some $k\in\N$. More precisely, 
\begin{cor}
	Let $A_1, \ldots, A_m \in \matpos$ be positive matrices. Given $k\geq 1$, consider the sequence 
	$$
	\textbf{A}_{\sigma}^{(k)} = (\underbrace{A_{\sigma_1(1)}, A_{\sigma_1(2)}, \ldots, A_{\sigma_1(m)}, \ldots, A_{\sigma_1(1)}, A_{\sigma_1(2)}, \ldots, A_{\sigma_1(m)}}_{km \ matrices}, A_{\sigma_2(1)}, \ldots)
	$$  
	where each $\sigma$ is a permutation of $km$-elements. Then, there exists $L>0$ such that	
	$$
	\delta^2(S_{n}(\textbf{A}_{\sigma}^{(k)}), G(A_1, \ldots, A_m)) \leq \frac{L}{n}.
	$$ 
\end{cor}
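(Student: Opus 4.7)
My plan is to reduce the corollary directly to Theorem \ref{mainteo} by passing from the original family $A_1,\ldots,A_m$ to the enlarged family consisting of $k$ copies of each $A_i$. More precisely, I would define
\[
(B_1,\ldots,B_{km}) \;=\; (\underbrace{A_1,\ldots,A_1}_{k},\underbrace{A_2,\ldots,A_2}_{k},\ldots,\underbrace{A_m,\ldots,A_m}_{k}),
\]
and then simply observe that $\textbf{A}_\sigma^{(k)}$ is exactly a sequence of the type treated by Theorem \ref{mainteo}, but for the family $(B_1,\ldots,B_{km})$ with block length $km$.

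The first step would be to verify the key identity
\[
G(B_1,\ldots,B_{km})\;=\;G(A_1,\ldots,A_m).
\]
This is an immediate consequence of the definition of $G$ as an argmin: the objective function for the enlarged family is
\[
\sum_{j=1}^{km}\delta^2(B_j,C)\;=\;k\sum_{i=1}^{m}\delta^2(A_i,C),
\]
which is minimized at exactly the same $C\in\matpos$ as the original objective. Uniqueness of the minimizer (already used in the paper) then gives the equality.

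The second step is purely notational: in each block of $km$ matrices appearing in $\textbf{A}_\sigma^{(k)}$, every matrix $A_i$ occurs exactly $k$ times, so such a block is precisely a permutation of the tuple $(B_1,\ldots,B_{km})$. Hence $\textbf{A}_\sigma^{(k)}$ has the form $\textit{\textbf{B}}_\sigma$ in the notation of Theorem \ref{mainteo}, applied to the family of $km$ positive matrices $B_1,\ldots,B_{km}$.

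Applying Theorem \ref{mainteo} to this enlarged family yields a constant $L>0$ (depending on the $B_i$'s, hence ultimately on the $A_i$'s and on $k$) such that
\[
\delta^2\bigl(S_n(\textbf{A}_\sigma^{(k)}),\,G(B_1,\ldots,B_{km})\bigr)\;\leq\;\frac{L}{n},
\]
and combining with the identity from the first step gives the claimed bound. I do not anticipate a serious obstacle here: the only substantive point is confirming that the geometric mean is invariant under duplication of entries, and this is essentially automatic from the variational definition. The corollary is therefore really just a reindexing of the main theorem.
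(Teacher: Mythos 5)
Your proposal is correct and follows essentially the same route as the paper, which disposes of the corollary in one line by noting that $G(A_1,\ldots,A_m,\ldots,A_1,\ldots,A_m)=G(A_1,\ldots,A_m)$ and then invoking Theorem \ref{mainteo} for the enlarged family of $km$ matrices. Your write-up merely makes explicit the two points the paper leaves implicit (the argmin computation showing invariance of $G$ under duplication, and the reindexing of each $km$-block as a permutation of the duplicated family), both of which are correct.
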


\medskip

Indeed, note that by definition
$$
G(\underbrace{A_1, \ldots, A_m, A_1, \ldots, A_m, \ldots, A_1, \ldots, A_m}_{k \ times}) = G(A_1, \ldots, A_m).
$$

\section{Proof of main result}

In this section we are dedicated to prove Theorem \ref{mainteo}. We begin with some basic fact about the inductive mean which is a direct consequence of \eqref{eq5}.

\begin{lem}\label{conse}
	Given two sequence $A = (A_i)_{i \in \N}, B = (B_i)_{i \in \N}$ in $\matpos$, then
	\begin{equation}\label{desig}
	\delta(S_n(A), S_n(B)) \leq \frac{1}{n}\sum_{i=1}^{n} \delta(A_i, B_i).
	\end{equation}
\end{lem}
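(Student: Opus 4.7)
The plan is to prove Lemma \ref{conse} by induction on $n$, using the convexity estimate \eqref{eq5} applied with $t=1/n$. The inductive mean is built by successively taking a geodesic midpoint-type step of the form $\cdot\#_{1/n}\cdot$, so inequality \eqref{eq5} is tailor-made to propagate a distance estimate from step $n-1$ to step $n$. I expect no real obstacle here: the whole argument is a one-line application of \eqref{eq5} plus the inductive hypothesis, and the algebra with the factor $(n-1)/n$ cleanly telescopes into the averaged bound on the right-hand side.

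First, I would check the base case $n=1$, where by definition $S_1(A)=A_1$ and $S_1(B)=B_1$, so $\delta(S_1(A),S_1(B))=\delta(A_1,B_1)$, giving equality in \eqref{desig}.

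Next, for the inductive step, assume the estimate holds at level $n-1$, namely
$$
\delta\bigl(S_{n-1}(A),S_{n-1}(B)\bigr)\leq \frac{1}{n-1}\sum_{i=1}^{n-1}\delta(A_i,B_i).
$$
By the recursive definition of the inductive mean, $S_n(A)=S_{n-1}(A)\#_{1/n}A_n$ and $S_n(B)=S_{n-1}(B)\#_{1/n}B_n$. Applying \eqref{eq5} with $t=1/n$ to the pairs $\bigl(S_{n-1}(A),A_n\bigr)$ and $\bigl(S_{n-1}(B),B_n\bigr)$ yields
$$
\delta\bigl(S_n(A),S_n(B)\bigr)\leq \left(1-\frac{1}{n}\right)\delta\bigl(S_{n-1}(A),S_{n-1}(B)\bigr)+\frac{1}{n}\delta(A_n,B_n).
$$
Combining this with the inductive hypothesis and simplifying $(n-1)/n\cdot 1/(n-1)=1/n$, the right-hand side becomes
$$
\frac{1}{n}\sum_{i=1}^{n-1}\delta(A_i,B_i)+\frac{1}{n}\delta(A_n,B_n)=\frac{1}{n}\sum_{i=1}^{n}\delta(A_i,B_i),
$$
which is exactly \eqref{desig} at level $n$, closing the induction.

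The only thing worth emphasizing is the role of \eqref{eq5}: it is the non-positive curvature manifestation that replaces the triangle inequality one would use in the Euclidean setting, and it is precisely what allows the convex combination of the two endpoint distances to bound the distance between the midpoints. No further machinery (such as the variance inequality \eqref{semip} or the explicit geodesic formula) is needed.
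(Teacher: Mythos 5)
Your proof is correct and is exactly the argument the paper intends: the paper states the lemma as ``a direct consequence of \eqref{eq5}'' without writing out the details, and your induction on $n$ with $t=1/n$ applied to $S_n(A)=S_{n-1}(A)\#_{1/n}A_n$ and $S_n(B)=S_{n-1}(B)\#_{1/n}B_n$ is the standard way to fill in that claim. Nothing further is needed.
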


\noindent Now we will follow the work of Lim and Pálfia \cite{[Lim2]}. This first lemma is actually step 1 in their paper.

\begin{lem}\label{lemma1}
	Given a sequence $A = (A_i)_{i \in \N}$ in $\matpos$ and $Z\in \matpos$, for every $k,m \in \N$ 
	\begin{align*}
	\delta^2(S_{k+m}(A), Z)  &\leq \ \frac{k}{k+m}\ \delta^2(S_{k}(A), Z) + \ds\frac{1}{k+m}\ds\sum_{j = 0}^{m - 1}\delta^2(A_{k+j+1}, Z)\\ 
	&\quad - \ds\frac{k}{(k+m)^2}\ds\sum_{j = 0}^{m - 1}\delta^2(S_{k+j}(A), A_{k+j+1}). 
	\end{align*}
\end{lem}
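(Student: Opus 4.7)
The plan is to iterate the basic semiparallelogram inequality \eqref{eq2} along the recursion $S_n(A)=S_{n-1}(A)\,\#_{1/n}\,A_n$ and then telescope.

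First, apply \eqref{eq2} with $X=S_{n-1}(A)$, $Y=A_n$ and $t=1/n$. Since $S_n(A)=X\#_{1/n} Y$, this gives, for every $n\ge 2$,
\begin{equation*}
\delta^2(S_n(A),Z)\ \le\ \frac{n-1}{n}\,\delta^2(S_{n-1}(A),Z)+\frac{1}{n}\,\delta^2(A_n,Z)-\frac{n-1}{n^2}\,\delta^2(S_{n-1}(A),A_n).
\end{equation*}
Multiplying by $n$ puts this in telescoping form:
\begin{equation*}
n\,\delta^2(S_n(A),Z)\ \le\ (n-1)\,\delta^2(S_{n-1}(A),Z)+\delta^2(A_n,Z)-\frac{n-1}{n}\,\delta^2(S_{n-1}(A),A_n).
\end{equation*}

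Next, I would sum this inequality over $n=k+1,k+2,\dots,k+m$. The telescoping of the first two terms leaves $(k+m)\,\delta^2(S_{k+m}(A),Z)$ on the left and $k\,\delta^2(S_k(A),Z)$ plus $\sum_{j=0}^{m-1}\delta^2(A_{k+j+1},Z)$ on the right, together with the negative contribution $-\sum_{j=0}^{m-1}\frac{k+j}{k+j+1}\,\delta^2(S_{k+j}(A),A_{k+j+1})$. Dividing through by $k+m$ then yields
\begin{equation*}
\delta^2(S_{k+m}(A),Z)\ \le\ \frac{k}{k+m}\,\delta^2(S_k(A),Z)+\frac{1}{k+m}\sum_{j=0}^{m-1}\delta^2(A_{k+j+1},Z)-\frac{1}{k+m}\sum_{j=0}^{m-1}\frac{k+j}{k+j+1}\,\delta^2(S_{k+j}(A),A_{k+j+1}).
\end{equation*}

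The final step is to weaken the last coefficient to the cleaner form stated in the lemma. It suffices to check the elementary inequality
\begin{equation*}
\frac{k+j}{(k+m)(k+j+1)}\ \ge\ \frac{k}{(k+m)^2}\qquad\text{for }0\le j\le m-1,\ m\ge 1,
\end{equation*}
which rearranges to $m(k+j)\ge k$ and is immediate since $mk\ge k$. This weakens the (negative) last sum to $-\frac{k}{(k+m)^2}\sum_{j=0}^{m-1}\delta^2(S_{k+j}(A),A_{k+j+1})$, exactly matching the stated bound. I do not anticipate a real obstacle here: the only thing to be careful about is the algebra in the telescoping step and the sign of the final weakening (since the term is negative, a \emph{smaller} coefficient in absolute value yields a \emph{larger} upper bound, which is what we want).
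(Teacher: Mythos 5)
Your proof is correct and follows essentially the same route as the paper: apply the semiparallelogram inequality \eqref{eq2} to the recursion $S_{n}(A)=S_{n-1}(A)\,\#_{1/n}\,A_{n}$, telescope from $n=k+1$ to $k+m$, and then weaken the coefficient $\tfrac{k+j}{k+j+1}$ to $\tfrac{k}{k+m}$. The elementary verification $m(k+j)\ge k$ and the remark about the sign of the weakened negative term are both accurate.
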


\begin{proof}
By the inequality \eqref{eq2} applied to $S_{n+1}(A)=S_n(A)\,\#_{\frac{1}{n+1}}\,A_{n+1}$ we obtain
\begin{align*}
(n+1)\ \delta^2(S_{n+1}(A), Z) - n\ \delta^2(S_{n}(A), Z) & \leq  \delta^2(A_{n+1}, Z) - \ds\frac{n}{(n + 1)}\delta^2(S_{n}(A), A_{n+1}).
\end{align*}
Summing these inequalities from $n=k$ until $n=k+m-1$ we get that the difference 
\begin{align*}
(k+m)\ \delta^2(S_{k+m}(A), Z) - k\ \delta^2(S_{k}(A), Z), 
\end{align*}
obtained from the telescopic sum of the left hand side, is less or equal than 
\begin{align*}
\sum_{j=0}^{m-1}\left(\delta^2(A_{k+j+1}, Z) - \ds\frac{k+j}{(k+j+1)}\delta^2(S_{k+j}(A), A_{k+j+1})\right).
\end{align*}
Finally, using that $\frac{k+j}{k+j+1}\geq \frac{k}{k+m}$ for every $j\in\{0,\ldots,m-1\}$, this sum is bounded from the above by 
\begin{align*}
\sum_{j=0}^{m-1}\left(\delta^2(A_{k+j+1}, Z) - \ds\frac{k}{(k+m)}\delta^2(S_{k+j}(A), A_{k+j+1})\right),
\end{align*}
which completes the proof.
\end{proof}

\noindent	Using the previous result we have this particular case:

\begin{lem}\label{lemma1prima}
	Let $A_1, \ldots, A_m \in \matpos$ fixed. For every sequence $\textbf{A}_{\sigma}$,
	\begin{align*}
	\delta^2(S_{(k+1)m}(\textbf{A}_{\sigma}), G)  & \leq \frac{k}{k+1}\delta^2(S_{km}(\textbf{A}_{\sigma}), G) + \ds\frac{1}{k+1}\left(\frac{1}{m}\ds\sum_{j = 0}^{m - 1}\delta^2((\textbf{A}_{\sigma})_{km+j+1}, G)\right) - \\ \nonumber & - \ds\frac{k}{(k+1)^2}\left(\frac{1}{m}\ds\sum_{j = 0}^{m - 1}\delta^2(S_{km+j}(\textbf{A}_{\sigma}), (\textbf{A}_{\sigma})_{km+j+1})\right). 
	\end{align*}
\end{lem}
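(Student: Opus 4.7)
The plan is essentially to apply Lemma \ref{lemma1} verbatim, with a careful choice of parameters that reindexes the sums so that the factor $1/m$ appears naturally in front of the two sums over $j = 0, \ldots, m-1$. The content of Lemma \ref{lemma1prima} is really nothing more than Lemma \ref{lemma1} specialized to the block structure of $\textbf{A}_\sigma$ and the choice $Z = G$.

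More precisely, I will apply Lemma \ref{lemma1} to the sequence $A = \textbf{A}_\sigma$ with the substitutions: take $Z = G$, replace the parameter ``$k$'' of Lemma \ref{lemma1} by $km$, and keep the parameter ``$m$'' equal to the number of matrices $A_1, \ldots, A_m$ that define the blocks. Then $k + m$ in Lemma \ref{lemma1} becomes $km + m = (k+1)m$, so the left-hand side of Lemma \ref{lemma1} becomes exactly $\delta^2(S_{(k+1)m}(\textbf{A}_\sigma), G)$, which is what appears on the left-hand side of Lemma \ref{lemma1prima}.

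It then remains to simplify the three coefficients of the right-hand side of Lemma \ref{lemma1} under these substitutions. These are elementary: the coefficient of the first term becomes $\frac{km}{(k+1)m} = \frac{k}{k+1}$; the coefficient in front of the first sum becomes $\frac{1}{(k+1)m} = \frac{1}{k+1} \cdot \frac{1}{m}$; and the coefficient in front of the second sum becomes $\frac{km}{((k+1)m)^2} = \frac{k}{(k+1)^2} \cdot \frac{1}{m}$. Collecting the factors of $1/m$ inside the sums produces exactly the averaged expressions appearing in the statement.

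There is really no obstacle here, since the argument is purely notational bookkeeping; the only thing to be a little careful about is to distinguish the two roles of the letter $m$ in Lemma \ref{lemma1} versus Lemma \ref{lemma1prima} (summation length vs.\ number of block matrices), and to verify that both roles happen to coincide in the specialization, which is precisely what makes the $1/m$ averages appear cleanly on the right-hand side.
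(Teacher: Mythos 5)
Your proof is correct and takes exactly the same route as the paper, which likewise obtains Lemma \ref{lemma1prima} by specializing Lemma \ref{lemma1} with $k\mapsto km$, $Z=G$, and $A=\textbf{A}_{\sigma}$. Your explicit check that the coefficients simplify to $\frac{k}{k+1}$, $\frac{1}{(k+1)m}$ and $\frac{k}{(k+1)^2 m}$ is just the bookkeeping the paper leaves implicit.
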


\begin{proof}
	Just change $k$ with $km$, $Z$ with $G$ and $A$ with $\textbf{A}_{\sigma}$ in Lemma \ref{lemma1}.

\end{proof}

\noindent	The next step is to find a lower bound for
$$
\frac{1}{m}\ds\sum_{j = 0}^{m - 1}\delta^2(S_{km+j}(\textbf{A}_{\sigma}), (\textbf{A}_{\sigma})_{km+j+1}).
$$
This is step 2 in \cite{[Lim2]} and here it is a little different. 
\\
\\
Let $A_1, \ldots, A_m \in \matpos$ fixed, we will denote
$$
\Delta :=	\max_{1 \leq i, j \leq m} \delta(A_i,A_j); \ \ \ \ \ \ 	\alpha := \frac{1}{m}\ds\sum_{i = 1}^{m}  \delta^2(G, A_i).
$$
Note that by \eqref{eq2}, for every sequence $\textbf{A}_{\sigma}$, all $k \in \N$ and all $j \in \N$, 
$$\delta(S_{km}(\textit{\textbf{A}}_{\sigma} ), (\textbf{A}_{\sigma})_j) \leq \Delta.$$

\begin{lem}\label{lemma2prima}
	Let $A_1, \ldots, A_m \in \matpos$ fixed. For every sequence $\textbf{A}_{\sigma}$ and for all $k \in \N$ we have
	\begin{align*}
	\frac{1}{m} \sum_{j = 0}^{m-1}\delta^2(S_{km+j}(\textbf{A}_{\sigma}), (\textbf{A}_{\sigma})_{km+j+1}) & \geq \delta^2(S_{km}(\textbf{A}_{\sigma}), G) + \alpha   -  \left(\ds\frac{m^2}{(km+1)^2} + 2\ds\frac{m}{km+1}\right) \Delta^2. 
	\end{align*}
\end{lem}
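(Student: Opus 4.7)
The plan is to lower-bound the sum on the left-hand side by combining three ingredients: a telescoping bound on how far $S_{km+j}(\textbf{A}_\sigma)$ can drift from $S_{km}(\textbf{A}_\sigma)$ as $j$ grows, a reverse triangle inequality that replaces $S_{km+j}$ by $S_{km}$ inside $\delta^2(\cdot,(\textbf{A}_\sigma)_{km+j+1})$, and finally the variance inequality \eqref{semip} applied at $Z=S_{km}(\textbf{A}_\sigma)$.

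First I would control the drift. Because $S_{n+1}(A)=S_n(A)\#_{1/(n+1)}A_{n+1}$ lies on the geodesic from $S_n(A)$ to $A_{n+1}$, the very definition of $\#_t$ gives $\delta(S_{n+1}(A),S_n(A))=\frac{1}{n+1}\delta(S_n(A),A_{n+1})$. Applied along the block, together with the a priori bound $\delta(S_n(\textbf{A}_\sigma),(\textbf{A}_\sigma)_{n+1})\leq \Delta$ (already noted before the lemma), this yields by the triangle inequality
\[
\delta(S_{km+j}(\textbf{A}_\sigma),S_{km}(\textbf{A}_\sigma))\leq \sum_{i=1}^{j}\frac{\Delta}{km+i}\leq \frac{j\,\Delta}{km+1}\leq \frac{m\,\Delta}{km+1},
\]
valid for $0\leq j\leq m-1$.

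Next, set $a_j:=\delta(S_{km}(\textbf{A}_\sigma),(\textbf{A}_\sigma)_{km+j+1})$ and $b_j:=\delta(S_{km+j}(\textbf{A}_\sigma),S_{km}(\textbf{A}_\sigma))$. The triangle inequality gives $\delta(S_{km+j}(\textbf{A}_\sigma),(\textbf{A}_\sigma)_{km+j+1})\geq |a_j-b_j|$, so
\[
\delta^2(S_{km+j}(\textbf{A}_\sigma),(\textbf{A}_\sigma)_{km+j+1})\geq a_j^2-2a_jb_j+b_j^2\geq a_j^2-2\Delta\,b_j,
\]
using $a_j\leq\Delta$ and discarding the $b_j^2$ term. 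Averaging in $j$ and inserting the drift bound produces
\[
\frac{1}{m}\sum_{j=0}^{m-1}\delta^2(S_{km+j}(\textbf{A}_\sigma),(\textbf{A}_\sigma)_{km+j+1})\;\geq\;\frac{1}{m}\sum_{j=0}^{m-1}a_j^2\;-\;\frac{2\Delta^2}{m(km+1)}\sum_{j=0}^{m-1}j.
\]
The remainder term is $\frac{(m-1)\Delta^2}{km+1}$, which is dominated by the quantity $\bigl(\frac{m^2}{(km+1)^2}+\frac{2m}{km+1}\bigr)\Delta^2$ claimed in the statement, so any looseness in the telescoping or in keeping/dropping $b_j^2$ can be absorbed into the stated constants.

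Finally, because $\{(\textbf{A}_\sigma)_{km+j+1}\}_{j=0}^{m-1}$ is just a permutation of $A_1,\dots,A_m$, one has $\frac{1}{m}\sum_{j=0}^{m-1}a_j^2=\frac{1}{m}\sum_{i=1}^{m}\delta^2(S_{km}(\textbf{A}_\sigma),A_i)$. Applying the variance inequality \eqref{semip} with $Z=S_{km}(\textbf{A}_\sigma)$ and $G=G(A_1,\dots,A_m)$ gives
\[
\frac{1}{m}\sum_{i=1}^{m}\delta^2(S_{km}(\textbf{A}_\sigma),A_i)\;\geq\;\delta^2(S_{km}(\textbf{A}_\sigma),G)+\alpha,
\]
and plugging this into the previous display yields the lemma. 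The main obstacle is the second step: the naive triangle inequality after squaring could a priori lose a sign, so one must argue that the bound $\delta^2\geq a^2-2ab+b^2$ survives even when $a<b$ (because $\delta\geq |a-b|\geq 0$ in any case), which is exactly what permits dropping $b_j^2$ and retaining the clean linear remainder.
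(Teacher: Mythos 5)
Your proof is correct and follows essentially the same route as the paper: control the drift $\delta(S_{km+j},S_{km})$ by telescoping, transfer the base point from $S_{km+j}$ to $S_{km}$ via the triangle inequality with an error absorbed into the $\bigl(\tfrac{m^2}{(km+1)^2}+\tfrac{2m}{km+1}\bigr)\Delta^2$ term, use that the $(k+1)$-th block is a permutation of $A_1,\dots,A_m$, and finish with the variance inequality at $Z=S_{km}(\textbf{A}_\sigma)$. The only (harmless) difference is that you square the reverse triangle inequality $c_j\geq|a_j-b_j|$ while the paper squares the forward one $a_j\leq c_j+b_j$; your version even yields a slightly sharper remainder that is absorbed into the stated constant.
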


\begin{proof}
	Let $0\leq j \leq m-1$. Note that by \eqref{eq2} and all $k$,
	$$
	\delta(S_{km+j}(\textit{\textbf{A}}_{\sigma}), S_{km+j+1}(\textit{\textbf{A}}_{\sigma})) \leq \frac{\Delta}{km+j+1}.
	$$
	\noindent	Hence
	\begin{align*}
	\delta(S_{km}(\textit{\textbf{A}}_{\sigma}), (\textbf{A}_{\sigma})_{km+j+1}) & \leq  \delta(S_{km}(\textit{\textbf{A}}_{\sigma}), S_{km+j}(\textit{\textbf{A}}_{\sigma})) + \delta(S_{km+j}(\textit{\textbf{A}}_{\sigma}), (\textbf{A}_{\sigma})_{km+j+1}) \\
	& \leq \ds\sum_{h=1}^{j} \ds\frac{\Delta}{km+h} + \delta(S_{km+j}(\textit{\textbf{A}}_{\sigma}), (\textbf{A}_{\sigma})_{km+j+1}) \\
	& \leq  \ds\frac{m}{km+1} \Delta + \delta(S_{km+j}(\textit{\textbf{A}}_{\sigma}), (\textbf{A}_{\sigma})_{km+j+1}). 
	\end{align*}
	\noindent	Therefore, for every $j \leq m$,
	\begin{equation*}
	\delta^2(S_{km}(\textit{\textbf{A}}_{\sigma}), (\textbf{A}_{\sigma})_{km+j+1}) \leq  \left(\ds\frac{m^2}{(km+1)^2} + 2\ds\frac{m}{km+1}\right) \Delta^2 + \delta^2(S_{km+j}(\textit{\textbf{A}}_{\sigma}), (\textbf{A}_{\sigma})_{km+j+1})
	\end{equation*}
	\noindent		where we have used that $\delta(S_{km+j}(\textit{\textbf{A}}_{\sigma}), (\textbf{A}_{\sigma})_{km+j+1}) \leq \Delta$ for every $k,j\in\N$. Summing up these inequalities and dividing by $m$, we get 
	\begin{align}\label{eq8}
	\frac{1}{m} \sum_{j = 0}^{m-1} \delta^2(S_{km}(\textit{\textbf{A}}_{\sigma}), (\textbf{A}_{\sigma})_{km+j+1}) & \leq  \left(\ds\frac{m^2}{(km+1)^2} + 2\ds\frac{m}{km+1}\right) \Delta^2 + \\ \nonumber & + \frac{1}{m} \sum_{j = 0}^{m-1}\delta^2(S_{km+j}(\textit{\textbf{A}}_{\sigma}), (\textbf{A}_{\sigma})_{km+j+1}).
	\end{align}
	By the variance inequality \eqref{semip},
	\begin{equation}\label{comb2}
	\delta^2(S_{km}(\textit{\textbf{A}}_{\sigma}), G) \leq \ds\frac{1}{m}\ds\sum_{i = 1}^{m} \delta^2(S_{km}(\textit{\textbf{A}}_{\sigma}), (\textbf{A}_{\sigma})_i) - \alpha.
	\end{equation}
	Note that
	\begin{equation}\label{clave1}
	\frac{1}{m} \sum_{j = 0}^{m-1} \delta^2(S_{km}(\textit{\textbf{A}}_{\sigma}), (\textbf{A}_{\sigma})_{km+j+1}) = \ds\frac{1}{m}\ds\sum_{i = 1}^{m} \delta^2(S_{km}(\textit{\textbf{A}}_{\sigma}), (\textbf{A}_{\sigma})_i).
	\end{equation}
	So, combining \eqref{eq8} and \eqref{comb2} we get the desired result.		
\end{proof}

\begin{rem}
	In equality \eqref{clave1} is essential that the matrices that appear in the first block $A_{\sigma_1(1)}, \ldots, \\ A_{\sigma_1(m)}$ are the same as those that appear in the $(k+1)$-th block (except by the order). On the other hand, note that this result can not be extended to weighted means as those consider in \cite{[Lim2]}. Indeed, in that setting, the weights do not allow to consider permutations.
\end{rem}

\noindent	Now we prove Theorem \ref{mainteo} and the rate of convergence for a special subsequence.

\begin{lem}\label{subsucesion}
	
	Let $A_1, \ldots, A_m \in \matpos$ fixed. For every sequence $\textbf{A}_{\sigma}$  and for all $k \in \N$,
	$$
	\delta^2(S_{km}(\textit{\textbf{A}}_{\sigma}), G) \leq \frac{L}{k},
	$$
	where $L = \alpha + 3 \Delta^2$. 
\end{lem}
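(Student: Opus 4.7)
The plan is to derive a one-step recursion for $d_k := \delta^2(S_{km}(\textbf{A}_{\sigma}), G)$ by feeding Lemma \ref{lemma2prima} into Lemma \ref{lemma1prima}, and then iterate. The first observation I would make is that the $(k+1)$-th block $A_{\sigma_{k+1}(0)},\ldots,A_{\sigma_{k+1}(m-1)}$ is merely a permutation of $A_1,\ldots,A_m$, so
$$
\frac{1}{m}\sum_{j=0}^{m-1}\delta^2((\textbf{A}_{\sigma})_{km+j+1}, G) = \frac{1}{m}\sum_{i=1}^{m}\delta^2(A_i, G) = \alpha.
$$
Substituting this identity into the second term of Lemma \ref{lemma1prima} and the lower bound of Lemma \ref{lemma2prima} into the third (negative) term, and then multiplying through by $(k+1)^2$, the coefficient of $d_k$ collapses to $k^2$ and the coefficient of $\alpha$ collapses to $1$, yielding
$$
(k+1)^2\, d_{k+1} \leq k^2\, d_k + \alpha + \left(\frac{km^2}{(km+1)^2} + \frac{2km}{km+1}\right)\Delta^2.
$$
Since $\frac{km^2}{(km+1)^2} \leq 1$ and $\frac{2km}{km+1} \leq 2$ for all $k,m\geq 1$, the bracketed factor is at most $3$, giving the clean recursion
$$
(k+1)^2\, d_{k+1} \leq k^2\, d_k + L, \qquad L = \alpha + 3\Delta^2.
$$

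Iterating this inequality from $k=1$ produces $k^2 d_k \leq d_1 + (k-1)L$, so the proof reduces to verifying the base estimate $d_1 \leq L$. To handle it, I would first observe, by induction on the number of matrices in the inductive mean using the convexity inequality \eqref{eq5} with $B=B'=A_j$, that $\delta(S_m(\textbf{A}_{\sigma}), A_j) \leq \Delta$ for every $j\in\{1,\ldots,m\}$. Then the variance inequality \eqref{semip} applied with $Z = S_m(\textbf{A}_{\sigma})$ gives
$$
d_1 = \delta^2(S_m(\textbf{A}_{\sigma}), G) \leq \frac{1}{m}\sum_{j=1}^{m}\delta^2(S_m(\textbf{A}_{\sigma}), A_j) - \alpha \leq \Delta^2 - \alpha \leq L.
$$
Combining everything, $k^2 d_k \leq L + (k-1)L = kL$, that is, $d_k \leq L/k$, which is the claim.

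The main technical obstacle is the algebraic bookkeeping that produces the recursion $(k+1)^2 d_{k+1}\leq k^2 d_k + L$: the $k/(k+1)$ and $k/(k+1)^2$ weights coming from the two lemmas have to combine exactly so that $d_k$ carries a coefficient of $k^2/(k+1)^2$ (which is what makes the quantity $k^2 d_k$ behave subadditively) while the $\alpha$ contributions also partially cancel. If any of those weights were slightly off, one would only get a slower rate than $1/k$, so checking that the cancellation is exact is the delicate step.
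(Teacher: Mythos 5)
Your proof is correct and follows essentially the same route as the paper: it combines Lemmas \ref{lemma1prima} and \ref{lemma2prima}, uses the identity $\frac1m\sum_j\delta^2((\textbf{A}_{\sigma})_{km+j+1},G)=\alpha$, and bounds the $\Delta^2$ coefficient by $3$ in exactly the same way. The only difference is cosmetic --- you telescope the quantity $k^2 d_k$ where the paper runs an induction on the hypothesis $d_k\le L/k$, and your base case goes through the variance inequality where the paper simply uses $d_1\le\Delta^2$; both variants are fine.
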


\begin{proof}
	
	We will prove it by induction. If $k = 1$ the result is trivial because $$\delta^2(S_{m}(\textit{\textbf{A}}_{\sigma}), G) \leq \Delta^2 \leq L.$$ Let's suppose that it's true for $k$. Combining Lemmas \ref{lemma1prima} and \ref{lemma2prima} we get
	\begin{align*}
	\delta^2(S_{(k+1)m}(\textit{\textbf{A}}_{\sigma}), G) & \leq \frac{k}{k+1}\delta^2(S_{km}(\textit{\textbf{A}}_{\sigma}), G) + \ds\frac{1}{k+1}\alpha \\
	& - \ds\frac{k}{(k+1)^2}\left[\delta^2(S_{km}(\textit{\textbf{A}}_{\sigma}), G) + \alpha - \left(\ds\frac{m^2}{(km+1)^2} + 2\ds\frac{m}{km+1}\right) \Delta^2\right]  \\
	& \leq \frac{k^2}{(k+1)^2}\delta^2(S_{km}(\textit{\textbf{A}}_{\sigma}), G) + \frac{1}{(k+1)^2}\alpha + 3\frac{1}{(k+1)^2}\Delta^2  \\
	& \leq  \frac{k}{(k+1)^2}L + \frac{1}{(k+1)^2} L  \\
	& \leq \frac{L}{(k+1)}.
	\end{align*}

\end{proof}

\bigskip

\noindent \textit{To conclude the proof of Theorem \ref{mainteo}}. let $n=km+d$ such that $d\in\{1,\ldots, m-1\}$, $k \in \N$. Since $X \#_{t} X = X$ for all $X \in \matpos$ and all $t \in [0, 1]$, using Lemma \ref{conse} with the sequences
\begin{align*}
&(\ (\textbf{A}_{\sigma})_{1},\ldots,(\textbf{A}_{\sigma})_{km}, \underbrace{S_{km}(\textit{\textbf{A}}_{\sigma}),\ldots, S_{km}(\textit{\textbf{A}}_{\sigma})}_{\mbox{\tiny{d times}}}\ )\\
\intertext{and}
&(\ (\textbf{A}_{\sigma})_{1},\ldots,(\textbf{A}_{\sigma})_{km}, \ (\textbf{A}_{\sigma})_{km+1}\ ,\ \ldots\ ,\ (\textbf{A}_{\sigma})_{km+d}\ ),
\end{align*}
and taking into account that $\delta(S_{km}(\textit{\textbf{A}}_{\sigma}),(\textbf{A}_{\sigma})_{km + j})\leq \Delta$ for every $j\in\{1,\ldots,d\}$, we get
\begin{align*}
\delta(S_{km}(\textit{\textbf{A}}_{\sigma}), S_{n}(\textit{\textbf{A}}_{\sigma})) 
& \leq \frac{1}{km + d}\sum_{j = 1}^{d} \delta(S_{km}(\textit{\textbf{A}}_{\sigma}),(\textbf{A}_{\sigma})_{km+j})\\
& \leq \frac{d}{km + d} \Delta 
\leq\frac{1}{k} \Delta\xrightarrow[k \rightarrow \infty]{} 0.
\end{align*}
Combining this with Lemma \ref{subsucesion} we obtain that for $n$ big enough $\delta(S_{n}(\textit{\textbf{A}}_{\sigma}), G)<\eps$.
\QED

\section{Main theorem in Hadamard spaces}

\bigskip

\noindent	In this section we will mention how all the previous result can be generalized to a much more general context. If we look in detail all the previous proofs, we can note that they are based mainly on the semiparallelogram law and the variance inequality (and their consequences). So, with the necessary chances, we can generalized the main theorem to non-positively curved (NPC) spaces, also called Hadamard spaces or (global) CAT(0) spaces.		A complete metric space $(M,\delta)$ is called a Hadamard space if it satisfies the semiparallelogram law, i.e., for each $x, y \in M$ there exists  $m \in M$ satisfying
\begin{equation}\label{desigualdadhadamard}
\delta^2(m, z) \leq \ds\frac{1}{2}\delta^2(x, z) + \ds\frac{1}{2}\delta^2(y, z) - \ds\frac{1}{4}\delta^2(x, y)
\end{equation}
for all $z \in M$.  The point $m$ is called (metric) midpoint between $x$ and $y$. Taking $z=x$ and $z=y$ in the inequality \eqref{desigualdadhadamard}, it is easy to conclude that $\delta(x,m)=\delta(m,y)=\frac12 \delta(x,y)$. Moreover, this inequality also implies that the midpoint is unique. The existence and uniqueness of midpoints give rise to a unique (metric) geodesic $\gamma_{a,b} : [0, 1] \rightarrow M$ connecting any given two points $a$ and $b$, that we denote as before as $a \#_{t} b$. The inequality \eqref{desigualdadhadamard} also extends to arbitrary points on geodesics.
\begin{equation}
\delta^2(x \#_t y, z) \leq (1-t)\delta^2(x, z) + t\delta^2(y, z) - t(1-t)\delta^2(x, y).
\end{equation}

\noindent As in the case of strictly positive matrices, the inductive mean is defined in general Hadamard spaces in the same way (using the geodesic mention before). Also, the notion of barycenter and all the results that we use related to the barycenter (Existence and uniqueness - Variance Inequality) can be extend to this spaces. We refer the reader to \cite{[Palfia]}, \cite{[sturm]}.

\section*{Acknowledgements:}
The author wish to express their gratitude to Professor Jorge A. Antezana and Professor Demetrio Stojanoff fot theirs valuable comments and suggestions.


\end{document}